\def\tr{\triangleright}
\newtheorem{theorem}{Theorem}
\newtheorem{corollary}[theorem]{Corollary}
\theoremstyle{definition}
\newtheorem{example}{Example}
\newtheorem{definition}{Definition}
\newtheorem{remark}{Remark}
\date{}
\title{\Large \textbf{Quandle Coloring Quivers of Surface-Links}}
\author{Jieon Kim\footnote{mail: jieonkim7@gmail.com. Supported by Young Researchers Program through the National Research Foundation of Korea (NRF), funded by the Ministry of Education, Science and Technology (NRF-2018R1C1B6007021).} \and
	Sam Nelson\footnote{Email: Sam.Nelson@cmc.edu. Partially supported by Simons Foundation collaboration grant 316709}\and
	Minju Seo\footnote{Email: mingzuu@pusan.ac.kr} \and
}
\begin{document}
	\maketitle
	
	\begin{abstract}
Quandle coloring quivers are directed graph-valued invariants of oriented knots and 
links, defined using a choice of finite quandle $X$ and set $S\subset\mathrm{Hom}(X,X)$
of endomorphisms. From a quandle coloring quiver, a polynomial knot invariant known as
the \textit{in-degree quiver polynomial} is defined. We consider quandle coloring quiver 
invariants for oriented surface-links, represented by marked graph diagrams. We provide 
example computations for all oriented surface-links with ch-index up to 10 for
choices of quandles and endomorphisms.
	\end{abstract}
	
	\parbox{5.5in} {\textsc{Keywords:} Quandle coloring quivers, surface-links, 
in-degree polynomial.
		
		\smallskip
		
		\textsc{2020 MSC:} 57K12}

	\section{\large\textbf{Introduction}}\label{I}
	
	In \cite{Lo}, a diagrammatic approach to the study of knotted and linked surfaces
	in $\mathbb{R}^4$ using knotted 4-valent graphs with decorated vertices was 
        introduced. Every compact surface embedded in 
	$\mathbb{R}^4$ can be positioned by ambient isotopy to have all its maxima 
	with respect to an axis $w$ at $w=1$, all of its minima at $w=-1$ and all of 
	its saddle points at $w=0$; this is known as a \textit{hyperbolic splitting}.
	Then the cross-section of the surface with the $w=0$ hyperplane is a knotted
	4-valent graph with the properties that (1) each vertex represents a saddle 
	point and (2) smoothing the vertices by taking a slightly higher or lower
	cross-section in the $w$ direction results in an unlink. These diagrams, 
	known as \textit{marked graph diagrams} or \textit{marked vertex diagrams} or 
	\textit{ch-diagrams}, determine surface-links up to ambient isotopy. 
	The combinatorial moves on marked 
	graph diagrams encoding ambient isotopy in $\mathbb{R}^4$ are known as 
	\textit{Yoshikawa moves}; in \cite{KJL2}, the first listed author and 
        coauthors discuss the history of these moves and provide generating sets 
        of the moves.
	
	\textit{Quandles} are algebraic structures with axioms defined from the
	Reidemeister moves of classical knot knot theory. Given a finite quandle $X$,
	the set of quandle homomorphisms from the fundamental quandle of a classical
	knot to $X$ is an invariant, and its cardinality is known as the 
	\textit{quandle counting invariant}. See \cite{EN} and the references therein for more.
	
	In \cite{CN}, the second-listed author and a coauthor defined
	an enhancement of the quandle counting invariant for classical
	knots, the \textit{quandle coloring quiver}. This quiver-valued
	invariant of classical knots and links categorifies the quandle counting 
	invariant, and the extra information in the quiver can distinguish knots with
	the same counting invariant. Since dealing with large quivers directly can be
	problematic, further invariants were defined, notably the \textit{in-degree 
		polynomial}.
	
	Quandle-based invariants have been defined for surface-links and studied in
	papers such as \cite{KJL, KJL2}. In this paper we define the quandle coloring quiver
	for surface-links. The paper is organized as follows. In Section \ref{SSL} we
	review the basics of quandles and of surface-links. In Section \ref{QCQ} we
	define quandle coloring quivers for surface-links and provide examples and 
	computations. In particular we show that the new invariant is a proper
	enhancement in the sense that it can distinguish surface-links with the same
	quandle counting invariant. We further extend the in-degree polynomial to 
	the case of surface-links. We conclude in Section \ref{Q} with some questions 
	for future work.

	\section{\large\textbf{Quandles and Surface-Links}}\label{SSL}

	In this section, we review surface-links, marked graph diagrams and quandles.
        We begin with the basics of quandle theory. See \cite{EN} and the references therein for more detail.

	\begin{definition}
		A set $X$ equipped with a binary operation $\vartriangleright$ is a \textit{quandle} if it satisfies 
		\begin{enumerate}
			\item $x\vartriangleright x=x$ for all $x\in X$,
			\item for each $y\in X$, the map $f_y:X\to X$ defined by $f_y(x)=x\vartriangleright y$ is a bijection, and
			\item $(x\vartriangleright y)\vartriangleright z=(x\vartriangleright z)\vartriangleright(y\vartriangleright z)$ for all $x,y,z\in X$.
		\end{enumerate}
	\end{definition}

\begin{example}
Let $K$ be an oriented classical knot. The \textit{fundamental quandle} or 
\textit{knot quandle} of $K$, denoted $Q(K)$,
is the quandle generated by generators corresponding to arcs in a diagram
of $K$ with the relation
\[\begin{array}{c}\includegraphics{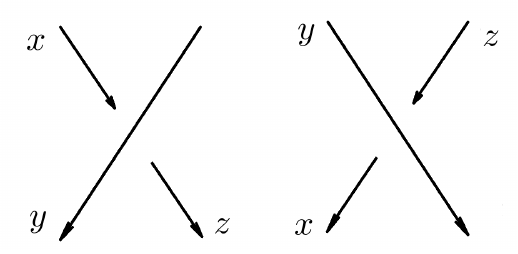}\\ z=x\tr y\end{array}\]
at each crossing. More formally, elements of the knot quandle are equivalence 
classes of quandle words in these generators modulo the equivalence relation
generated by the quandle axioms and the crossing relations. There is also
a geometric interpretation of the knot quandle in terms of path homotopy 
classes 
of paths from a basepoint to the boundary torus of the knot complement; see
\cite{EN} and the references therein for more detail.
\end{example}

\begin{example}
Let $G$ be a group. $G$ has several quandle structures including:
\begin{itemize}
\item \textbf{Core quandles}. Setting 
\[x\tr y= xy^{-1}x\] 
defines a
quandle structure on any group. In the case of a cyclic group $G=\mathbb{Z}_n$
written additively, this structure is sometimes called a \textit{cyclic quandle}
and has the operation 
\[x\tr y=2y-x.\]
The famous Fox colorings of knots are really homomorphisms from the knot quandle
to the core quandle of $\mathbb{Z}_3$.
\item \textbf{Conjugation quandles}. For any integer $n\in\mathbb{Z}$, setting
\[x\tr y=y^{-n}xy^n\]
defines a quandle structure on $G$. The case $n=1$ for $G$ the knot group of a knot
is closely related to the knot quandle, and the cases $n\ge 2$ are
closely related to the \textit{Wada groups} of a knot, the $n=2$ case of which was 
shown to be a complete invariant for classical knots in \cite{NN}. The $n=0$ case
(or indeed, any $n$ for $G$ abelian) is called the \textit{trivial quandle} on
$n$ elements.
\end{itemize}
\end{example}

\begin{example}
Let $X$ be any module over $\mathbb{Z}[t^{\pm 1}]$. Then $X$ is a quandle with
\[x\tr y=tx+(1-t)y\]
known as an \textit{Alexander quandle}. The \textit{Fundamental Alexander quandle}
of a classical knot, with a generator for each arc and the relation
\[\begin{array}{c}\includegraphics{jk-sn-ms-2.pdf}\\ z=tx+(1-t)y\end{array}\]
for each crossing, is the Alexander module of the knot considered as a quandle.
As such, it determines the Alexander polynomials of the knot.
\end{example}

\begin{example}
Let $V$ be a vector space over a field $\mathbb{F}$ and $[,]:V\times V\to \mathbb{F}$
a symplectic form. Then $V$ is a quandle, known as a \textit{symplectic quandle},
under the operation
\[\vec{x}\tr \vec{y}=\vec{x}+[\vec{x},\vec{y}]\vec{y}.\]
\end{example}

\begin{example}
Any quandle structure on a finite set $X=\{1,2,\dots,n\}$ can be expressed 
via an operation table. For example, $\mathbf{Core}(\mathbb{Z}_3)$ has the 
operation table
\[\begin{array}{l|lll}
\tr & 1 & 2 & 3\\ \hline
1 & 1 & 3 & 2 \\
2 & 3 & 2 & 1 \\
3 & 2 & 1 & 3.
\end{array}\]
\end{example}

	\begin{definition}
		Let $X,Y$ be quandles with multiplication operations indicated by $\vartriangleright_X$ and $\vartriangleright_Y$ respectively. A map $f:X\to Y$ is a \textit{quandle homomorphism} given that $f(a\vartriangleright_X b)=f(a)\vartriangleright_Y f(b)$ for any $a,b\in X$.
	\end{definition}
	
	\begin{definition}
		Let $L$ be an oriented knot or link and $X$ a finite quandle called the coloring quandle. The coloring space $\mathrm{Hom}(\mathcal{Q}(L),X)$ is the space of quandle homomorphisms from $\mathcal{Q}(L)\to X$. The \textit{quandle  counting  invariant} is the cardinality of the coloring space, $\left| Hom(\mathcal{Q}(L),X) \right|$, which we will denote by $\Phi^\mathbb{Z}_X (L)$.	
	\end{definition}

We now recall the basics of surface-links.

	\begin{definition}
		A \textit{surface-link} is a closed surface smoothly embedded in $\mathbb{R}^4$.
		If a surface-link is (orientable and) oriented, then we call it an \textit{oriented\ surface-link}.
	\end{definition}

	Two surface-links $F$ and $F'$ are said to be equivalent 
	if there exists an orientation-preserving homeomorphism $h : \mathbb{R}^4\to\mathbb{R}^4$ such that $h(F) = F'$.
	When $F$ and $F'$ are oriented, it is assumed that $h|_F : F\to F'$ is an orientation
	preserving homeomorphism. An equivalence class of a surface-link is called a surface-link type.
	A surface-knot is trivial(or unknotted) if it is obtained from some standard surfaces in $\mathbb{R}^4$ (i.e. a standard 2sphere, a standard torus, and standard projective planes in $\mathbb{R}^4$) by taking a connected sum.
	A surface-link is trivial(or unknotted) if it is obtained from some trivial surface-knots by taking a split union.

	\begin{definition}
		A \textit{ marked  vertex  graph} or simply a \textit{marked graph} is a spatial graph $G$ in $\mathbb{R}^3$ which satisfies that $G$ is a finite regular graph possibly with 4-valent vertices, say $v_1, v_2, ..., v_n$; each vertex $v_i$ is a rigid vertex (that is, we fix a rectangular neighborhood $N_i$ homeomorphic to $\{(x, y)| -1\le x,\ y\le 1\}$, where $v_i$ corresponds to the origin and the edges incident  to $v_i$ are represented by $x^2 = y^2$); each vertex $v_i$ has a \textit{marker} which is the interval on $N_i$ given by $\{(x, 0)|-{1\over 2}\le x\le {1\over 2} \}$.
	\end{definition}

	\begin{definition}
		An $\it orientation$ of a marked graph G is a choice of an orientation for each edge of G in such a way that every vertex in G looks like $\xy (0,3);(3,0)**@{-} ?<*\dir{<},
		(3,0);(6,-3) **@{-} ?>*\dir{>},
		(0,-3);(2,-1) **@{-} ?>*\dir{>}, 
		(2,-1);(3,0) **@{-}, 
		(3,0);(4,1) **@{-} , 
		(4,1);(6,3) **@{-} ?<*\dir{<}, 
		(1,0);(5,0)**@{-} ,
		(1,0.1);(5,0.1)**@{-},
		(1,-0.1);(5,-0.1)**@{-},
		(1,0.2);(5,0.2)**@{-},
		(1,-0.2);(5,-0.2)**@{-}, \endxy$ or $\xy (0,3);(2,1)**@{-} ?>*\dir{>},
		(2,1);(4,-1)**@{-},
		(4,-1);(6,-3) **@{-} ?<*\dir{<},
		(0,-3);(3,0) **@{-} ?<*\dir{<},  
		(3,0);(6,3) **@{-} ?>*\dir{>}, 
		(1,0);(5,0)**@{-} ,
		(1,0.1);(5,0.1)**@{-},
		(1,-0.1);(5,-0.1)**@{-},
		(1,0.2);(5,0.2)**@{-},
		(1,-0.2);(5,-0.2)**@{-}, \endxy$.
		A marked graph is said to be \textit{orientable} if it admits an orientation.
		By an \textit{oriented marked graph}, we mean an orientable marked graph with a fixed orientation.
	\end{definition}
	
	\begin{definition}
		If two oriented marked graphs are ambient isotopic in $\mathbb{R}^3$ with keeping rectangular neighborhoods, an orientation and markers, they are \textit{equivalent}.
	\end{definition}
	
	\begin{definition}
		For any surface-link $F$, there exists a surface-link $F'$ satisfying the following:
		\begin{enumerate}
			\item $F'$ is equivalent to $F$ and has only finitely many Morse's critical points,
			\item all maximal points of $F'$ lie in $\mathbb{R}^3_1$,
			\item all minimal points of $F'$ lie in $\mathbb{R}^3_{-1}$,
			\item all saddle points of $F'$ lie in $\mathbb{R}^3_0$.			
		\end{enumerate}
		We call a representation $F'$ in the previous theorem, a \textit{hyperbolic splitting} of $F$.
	\end{definition}
	
	\begin{definition}
		The zero section $\mathbb{R}^3_0 \cap F'$ of the hyperbolic splitting $F^{'}$ gives us a 4-valent graph. We assign to each vertex a marker that informs us about one of the two possible types of saddle points. Then we obtain a diagram representing $F$, which is called a \textit{marked graph diagram} of $F$.
		
		\[ \resizebox{0.65\textwidth}{!}{\includegraphics{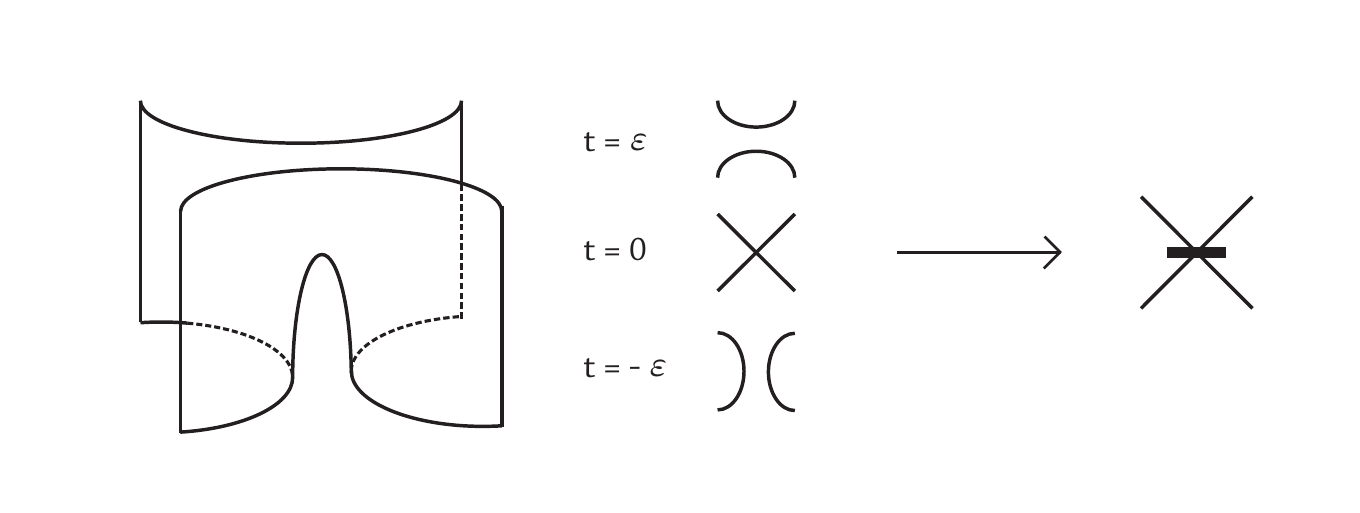}} \]

	\end{definition}

	\begin{definition}
		An oriented marked graph $G$ in $\mathbb{R}^3$ can be described as usual by a diagram $D$ in $\mathbb{R}^2$, which is an oriented link diagram in $\mathbb{R}^2$ possibly with some marked 4-valent vertices whose incident four edges have orientations illustrated as above, and is called an \textit{oriented marked graph diagram} of $G$.
		
			\[ \includegraphics{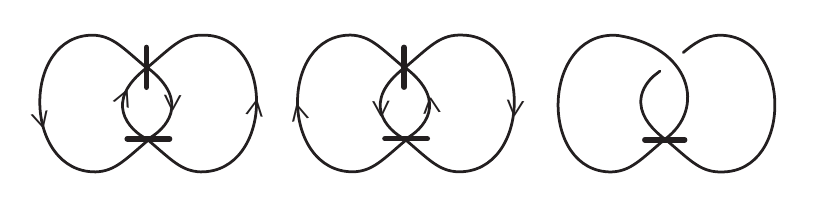} \]

	\end{definition}
	
	\begin{definition}
		A (oriented) marked graph diagram D is \textit{admissible} if both resolutions $L_{+}(D)$ and $L_{-}(D)$ are link diagrams of trivial links.
		
		\[\includegraphics[height=3cm, width=10cm]{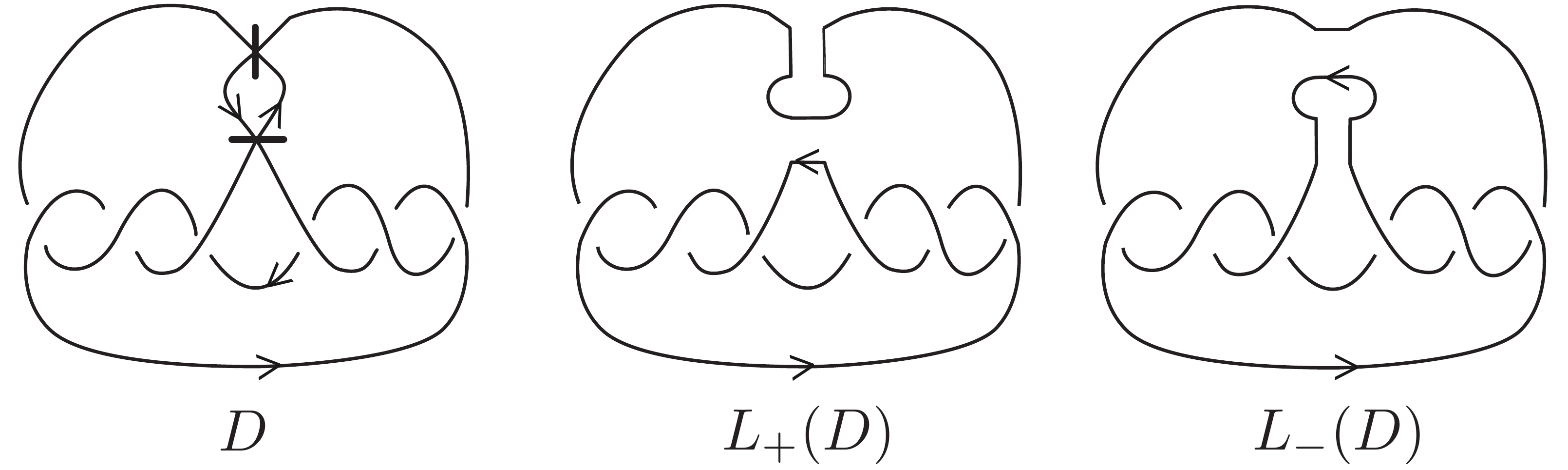} \]
	\end{definition}

Given an admissible diagram, we can connect the two smoothings with a 
surface with saddle points where marked vertices were and cap off the 
unlinked components to obtain a broken surface diagram of the surface-link.

\begin{example}
The pictured marked graph diagram determines the pictured surface-link.
\[\includegraphics{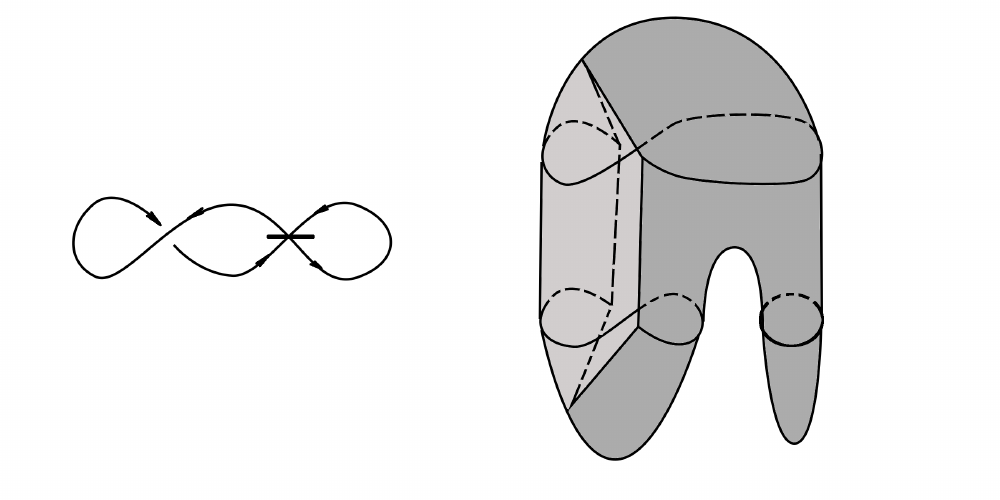}\]
\end{example}

	In summary, we have the following:
	\begin{theorem} (See \cite{KJL,KJL2,Lo} for more details)

		\begin{enumerate}
			\item For an admissible marked graph diagram $D$, there is a
			surface-link $L$ represented by $D$.
			\item Let $L$ be a surface-link. Then there is an admissible
			marked graph diagram $D$ such that $L$ is represented by $D$.
		\end{enumerate}
	\end{theorem}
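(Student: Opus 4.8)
The plan is to establish the two parts separately: part (1) by an explicit geometric construction of a surface from the diagram, and part (2) by putting a given surface-link into Morse normal form.

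For part (1), I would realize $D$ inside the hyperplane $\mathbb{R}^3_0 = \mathbb{R}^3\times\{0\}\subset\mathbb{R}^4$. At each marked vertex the marker records which of the two smoothings of the $4$-valent vertex belongs to $L_+(D)$ and which to $L_-(D)$; I would replace a neighborhood of each vertex by a standard saddle band (modeled on $z=x^2-y^2$) that joins the relevant arc of $L_+(D)$, pushed up to the level $w=\epsilon$, to the corresponding arc of $L_-(D)$, pushed down to $w=-\epsilon$. Over the edges of $D$, away from the vertices, the surface is simply the product of the edge with a short interval in the $w$-direction, interpolating between $L_+(D)$ at $w=\epsilon$ and $L_-(D)$ at $w=-\epsilon$. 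This yields a compact surface properly embedded in the slab $\mathbb{R}^3\times[-\epsilon,\epsilon]$ with upper boundary $L_+(D)$ and lower boundary $L_-(D)$, having exactly the marked vertices of $D$ as saddle points. Admissibility now enters: since $L_+(D)$ is a trivial link it bounds a disjoint union of embedded disks, which can be pushed into the open half-space $w>\epsilon$ (keeping their boundaries on $L_+(D)$ and keeping them disjoint from the slab surface), and similarly $L_-(D)$ bounds disjoint disks in $w<-\epsilon$. Capping the slab surface with these disks produces a closed surface $L$ smoothly embedded in $\mathbb{R}^4$ whose hyperbolic splitting, in the sense of the earlier definition, has its saddles at the vertices of $D$, its maxima in the upper caps, and its minima in the lower caps; hence $D$ represents $L$.

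For part (2), I would invoke Morse theory and the existence of a hyperbolic splitting asserted in the earlier definition: after a small ambient perturbation, $w|_L$ is Morse with distinct critical values, and then the standard rearrangement lemmas allow one to isotope $L$ in $\mathbb{R}^4$ so that all index-$0$ critical points lie below all index-$1$ critical points, which in turn lie below all index-$2$ critical points, and finally so that the minima occur at $w=-1$, the saddles at $w=0$, and the maxima at $w=1$. A further small perturbation makes the saddles mutually transverse inside the level hyperplane, so that $L\cap\mathbb{R}^3_0$ is a $4$-valent spatial graph; decorating each vertex with the marker that records the local type of the corresponding saddle gives a marked graph diagram $D$ of $L$. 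To see that $D$ is admissible, fix a small $\epsilon>0$. Since $w|_L$ has no critical values in the interval $(-1,0)$, the level set $L\cap\mathbb{R}^3_{-\epsilon}$ is isotopic in $\mathbb{R}^3$ to $L\cap\mathbb{R}^3_{-1+\epsilon}$, which consists of one small unknotted, unlinked circle encircling each minimum, hence is a trivial link; and this level set is, by construction, one of the two resolutions $L_\pm(D)$. The symmetric argument using the maxima and the absence of critical values in $(0,1)$ shows that the other resolution is also a trivial link. Therefore $D$ is admissible and represents $L$.

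The step I expect to be the main obstacle is the Morse-theoretic rearrangement in part (2): making precise that one may isotope $L$ \emph{within} $\mathbb{R}^4$ — not merely abstractly reorder the handles of $L$ as a surface — so that the critical points of the height function occur in the prescribed index order and at the prescribed levels, all while keeping the embedding intact. This requires the ambient isotopy extension theorem together with the fact that there is no obstruction to sliding a lower-index critical point past a higher-index one in an ambient family. Rather than reproducing this verification, I would cite \cite{KJL,KJL2,Lo}, where precisely this normal-form statement — and the resulting dictionary between surface-links and admissible marked graph diagrams — is carried out in detail.
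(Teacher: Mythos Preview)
Your argument is correct and follows the standard route; in fact it spells out exactly the construction the paper alludes to in the surrounding text (the sentence ``Given an admissible diagram, we can connect the two smoothings with a surface with saddle points where marked vertices were and cap off the unlinked components\ldots'' and the definition of a hyperbolic splitting). Note, however, that the paper itself gives \emph{no} proof of this theorem: it is stated with a parenthetical citation to \cite{KJL,KJL2,Lo} and nothing more, so there is no in-paper argument to compare against --- your write-up simply supplies what the paper delegates to the references.
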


\begin{remark}
We note that non-admissible marked vertex diagrams represent not
closed surface-links but cobordisms between the classical links given by
the upper and lower smoothings.
\end{remark}
	
The combinatorial moves on marked graph diagrams capturing ambient isotopy of 
surface-links are known as the \textit{Yoshikawa moves}. In addition to the
three classical Reidemeister moves, one generating set of oriented Yoshikawa 
moves is:
\[\includegraphics{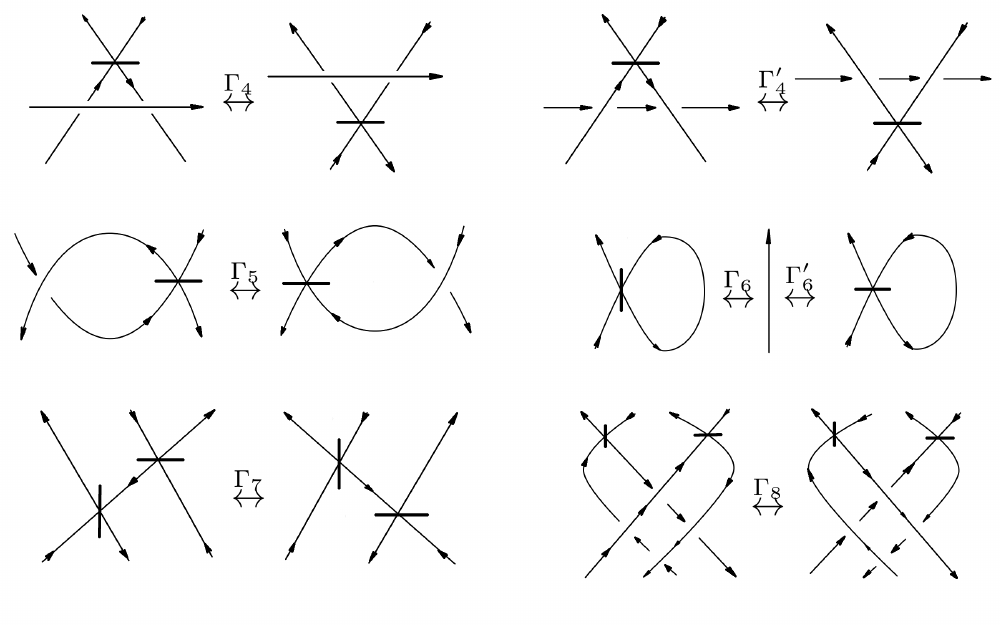}\]
See \cite{KJL2} for more.

	\section{\large\textbf{Quandle Coloring Quivers of Surface-Links}}\label{QCQ}

Next we recall some facts about quandle coloring quivers. See \cite{CN} for more.

	\begin{definition}
		Let $X$ be a finite quandle and $L$ an oriented link. For any set of quandle endomorphisms $S\subset Hom(X,X)$, the associated \textit{quandle coloring quiver}, denoted $Q^S_X (L)$, is the directed graph with a vertex for every element $f\in Hom(\mathcal{Q}(L),X)$ and an edge directed from $f$ to $g$ when $g = \phi f$ for an element $\phi \in S$. Important special cases include the case $S = Hom(X,X)$, which we call the \textit{full quandle coloring quiver} of $L$ with respect to X, denoted $Q_X (L)$, and the case when $S = \{\phi \}$ is a singleton, which we will denote by $Q^\phi_X (L)$.	
		
		\[	\includegraphics[height=4cm, width=5.5cm]{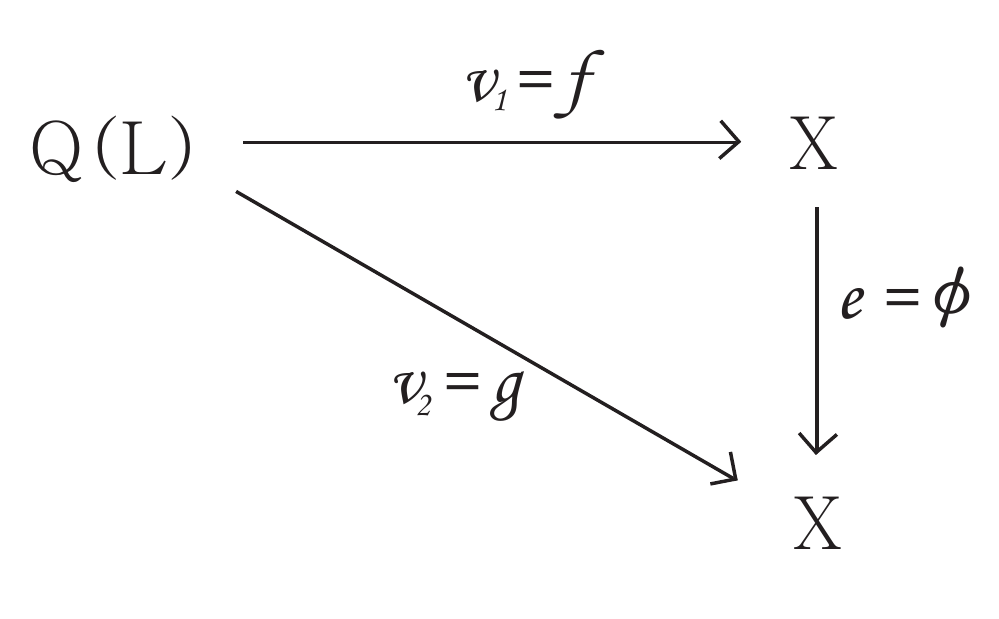}\]
		
	\end{definition}
	
In \cite{CN}, we find the following result:

	\begin{theorem}
		Let $X$ be a finite quandle, $S\subset Hom(X,X)$ and $L$ an oriented link. Then the quiver $Q^S_X (L)$ is an invariant of $L$.
	\end{theorem}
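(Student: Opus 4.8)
The plan is to show that the quiver $Q^S_X(L)$ is unchanged, up to directed-graph isomorphism, under all of the (oriented) Reidemeister moves, since two diagrams represent the same oriented link precisely when they are related by a finite sequence of such moves. The starting point is the known fact that the coloring space $\mathrm{Hom}(\mathcal{Q}(L),X)$ is an invariant of $L$: a Reidemeister move on a diagram $D$ giving a diagram $D'$ induces, via the Tietze transformations relating the two presentations of $\mathcal{Q}(L)$, a canonical bijection $\Psi \colon \mathrm{Hom}(\mathcal{Q}(D),X) \to \mathrm{Hom}(\mathcal{Q}(D'),X)$. This $\Psi$ will be the vertex map of our claimed quiver isomorphism, so the only thing left to check is that it respects edges.

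First I would set up the vertex correspondence explicitly: recall that a coloring of $D$ by $X$ is an assignment of an element of $X$ to each arc satisfying $z = x \tr y$ at every crossing, and that under a Reidemeister move only finitely many arcs are created, destroyed, or merged, with the colors of the ``new'' arcs being forced (uniquely determined) by the colors of the unchanged arcs via the quandle axioms — this is exactly where axioms (1)–(3) of a quandle are used. This gives the bijection $\Psi$ on the nose. Next, the key observation is that postcomposition with a fixed endomorphism $\phi \in \mathrm{Hom}(X,X)$ commutes with $\Psi$: if $f' = \Psi(f)$, then $\phi f' = \Psi(\phi f)$, because $\phi f'$ and $\Psi(\phi f)$ are both colorings of $D'$ that agree on the unchanged arcs (namely, they are $\phi \circ f$ there), and by the uniqueness-of-forced-colors statement two colorings of $D'$ agreeing on the unchanged arcs must be equal. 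Hence for each $\phi \in S$ there is an edge $f \to g$ in $Q^S_X(D)$ (i.e. $g = \phi f$) if and only if there is an edge $\Psi(f) \to \Psi(g)$ in $Q^S_X(D')$ (i.e. $\Psi(g) = \phi\,\Psi(f)$). Therefore $\Psi$ is an isomorphism of directed graphs, and one checks multiplicities/loops are preserved as well since the edge set is defined by the same condition on both sides.

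The main obstacle — really the only substantive point — is establishing the ``forced colors are unique'' claim cleanly for each Reidemeister move, i.e. verifying that the map $\Psi$ is well-defined and bijective and that it is natural in the sense above; this is a finite case check over the (oriented) Reidemeister moves, and in each case it reduces to a short computation using the quandle axioms (for instance, the type III move reduces precisely to axiom (3), and the type I and II moves to axioms (1) and (2)). I would remark that since postcomposition by $\phi$ is simply applying $\phi$ arc-by-arc, naturality of $\Psi$ with respect to all of $\mathrm{Hom}(X,X)$ is automatic once $\Psi$ itself is shown to be the ``relabel the forced arcs'' bijection, so no extra work is needed to pass from the full quiver $Q_X(L)$ to an arbitrary $Q^S_X(L)$: restricting the edge set to a subset $S \subseteq \mathrm{Hom}(X,X)$ preserves the isomorphism. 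This completes the argument; the details are as in \cite{CN}, and we record the statement here for use in the surface-link setting.
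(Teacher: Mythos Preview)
Your argument is correct. In this paper the theorem is not proved at all; it is simply quoted from \cite{CN} as a known result, so there is no ``paper's own proof'' to compare against. Your approach---take the Reidemeister-move bijection $\Psi$ on $X$-colorings and observe that it commutes with postcomposition by any $\phi\in\mathrm{Hom}(X,X)$, hence is a quiver isomorphism---is exactly the standard argument and is essentially what \cite{CN} does. One small stylistic remark: the naturality step can be phrased even more cleanly by noting that a Reidemeister move induces an isomorphism $\mathcal{Q}(D)\cong\mathcal{Q}(D')$ of fundamental quandles, and $\Psi$ is precomposition with this isomorphism; commutation with postcomposition by $\phi$ is then automatic by associativity of composition, with no need to revisit the ``forced colors'' case analysis.
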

	
	\begin{corollary}
		Any invariant of directed graphs applied to $Q^S_X (L)$ defines an invariant of oriented links.
	\end{corollary}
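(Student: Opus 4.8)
The plan is to unwind what the two occurrences of the word ``invariant'' mean in the statement and then simply compose them. First I would recall how the assertion of the preceding Theorem, that $Q^S_X(L)$ is an invariant of $L$, should be read: if $L$ and $L'$ are equivalent oriented links, then the directed graphs $Q^S_X(L)$ and $Q^S_X(L')$ are isomorphic \emph{as directed graphs}. Strictly speaking the vertex set $\mathrm{Hom}(\mathcal{Q}(L),X)$ is read off from a diagram, so the content of the Theorem is precisely that the isomorphism type of the quiver is independent of the chosen diagram and depends only on the equivalence class of $L$.

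Next I would pin down the meaning of an \emph{invariant of directed graphs}: this is a function $\iota$ defined on directed graphs, valued in some set, with the property that $\iota(\Gamma)=\iota(\Gamma')$ whenever $\Gamma\cong\Gamma'$ as directed graphs. Typical examples to keep in mind are the number of vertices, the number of edges, the multiset of in-degrees or of out-degrees, and the number of connected components; the in-degree polynomial extended later in the paper is another instance of such an $\iota$.

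With both notions in place the argument is immediate. Given equivalent oriented links $L$ and $L'$, the Theorem supplies an isomorphism $Q^S_X(L)\cong Q^S_X(L')$ of directed graphs, and the isomorphism-invariance of $\iota$ then yields $\iota(Q^S_X(L))=\iota(Q^S_X(L'))$. Hence the assignment $L\mapsto \iota(Q^S_X(L))$ is constant on equivalence classes of oriented links, which is exactly the statement that it is an oriented link invariant.

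I do not expect a genuine obstacle here; the only step deserving a sentence of care is the first one, namely making explicit that ``$Q^S_X$ is an invariant'' already encodes the statement that equivalent links have \emph{isomorphic} (rather than literally identical) quivers, since that is precisely the hypothesis needed to legitimately feed the quiver into $\iota$.
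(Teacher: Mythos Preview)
Your proposal is correct and is exactly the obvious argument the paper has in mind; the paper in fact states this corollary without any proof, treating it as an immediate consequence of the preceding theorem. Your explicit unpacking of the two uses of ``invariant'' and the composition step is precisely the content being silently invoked.
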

	
	\begin{example}\label{ex1}
Let $X$ be the quandle with operation table
\[\begin{array}{r|rrrrrr}
\tr & 1 & 2 & 3 & 4 & 5 & 6 \\ \hline
1 & 1 & 3 & 2 & 1 & 1 & 1 \\
2 & 3 & 2 & 1 & 2 & 2 & 2 \\
3 & 2 & 1 & 3 & 3 & 3 & 3 \\
4 & 5 & 5 & 5 & 4 & 4 & 5 \\
5 & 4 & 4 & 4 & 5 & 5 & 4 \\
6 & 6 & 6 & 6 & 6 & 6 & 6.
\end{array}
\]
The endomorphism ring $\mathrm{Hom}(X,X)$ has 68 elements including for example 
$\phi=[6,6,6,5,4,2]$, i.e., the map sending $1$ to $\phi(1)=6$, $2$ to $\phi(2)=6$,
\dots, $\phi(6)=2$.
Then the trefoil knot $3_1$ has quandle coloring quiver $Q^{\phi}_X(3_1)$ given by
\[\includegraphics{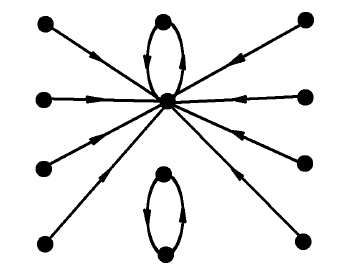}.\]

%
		
	\end{example}
	
	\begin{example}\label{ex2}
The full quandle coloring quiver for the $(4,2)$-torus link with respect to the quandle given by the operation
table
		\begin{center}
			\begin{tabular}{l|llll}
				$\vartriangleright$	& 1 & 2 & 3 &  \\ \hline%
				1	& 1 & 1 & 2 &  \\
				2	& 2 & 2 & 1 &  \\
				3	& 3 & 3 & 3 & 
			\end{tabular}
		\end{center}
is
		\[	\includegraphics{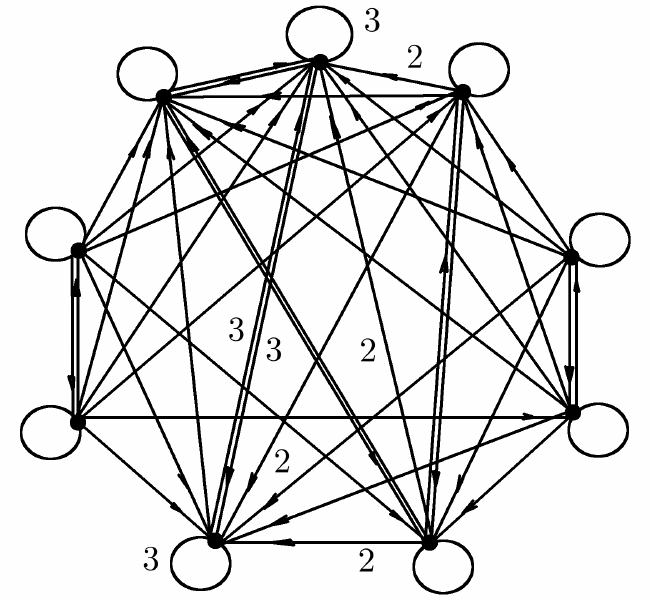}\]
where we indicate multiple edges with numbers.
	\end{example}
As noted in \cite{CN},
	the out-degree $deg^{-}(f)$ of every vertex is $\lvert S \rvert$; however, different vertices may have different in-degrees $deg^{+}(f)$. In-degrees of each vertex can be encoded as a polynomial knot invariant:
	
	\begin{definition}
		Let $X$ be a finite quandle, $S\subset Hom(X,X)$ a set of quandle endomorphisms, $L$ an oriented link and $Q^S_X (L)$ the associated quandle coloring quiver with set of vertices $V(Q^S_X (L))$. Then the \textit{in-degree quiver polynomial} of $L$ with respect to $X$ is\\
		\[\Phi^{deg^+}_{X,S} (L)=\sum_{f\in V(Q^S_X (L))} u^{deg^+ (f)}.\]
		If $S = \{\phi\}$ is a singleton we will write $V(Q^S_X (L))$ as $V(Q^\phi_X (L))$ and $\Phi^{\deg^+}_{X,S}(L)$ as $\Phi^{\deg^+}_{X,\phi}(L)$, and
and if $S = Hom(X,X)$ we will write $V(Q^S_X (L))$ as $V(Q_X (L))$
and $\Phi^{\deg^+}_{X,S}(L)$ as$\Phi^{\deg^+}_{X}(L)$.
	\end{definition}
	
	\begin{example}
		In Example \ref{ex2}, the $(4,2)$-torus link $L4a1$ has in-degree quiver polynomial $\Phi^{deg^+}_{X,\phi} (L)=5+u+2u^2+u^4$ with respect to the quandle $X$ and endomorphism $\phi=[1,1,2]$.
	\end{example}

We will now extend quandle coloring quivers to the case of surface-links.

	\begin{definition}
		Let X be a quandle and let $D$ be an oriented marked graph diagram. Let $A(D)$ be the set of arcs of $D$. A \textit{coloring of $D$ by $X$}, also called an \textit{$X$-coloring of $D$}, is an assignment of elements of $X$ to the elements of $A(D)$ as shown:
\[\includegraphics{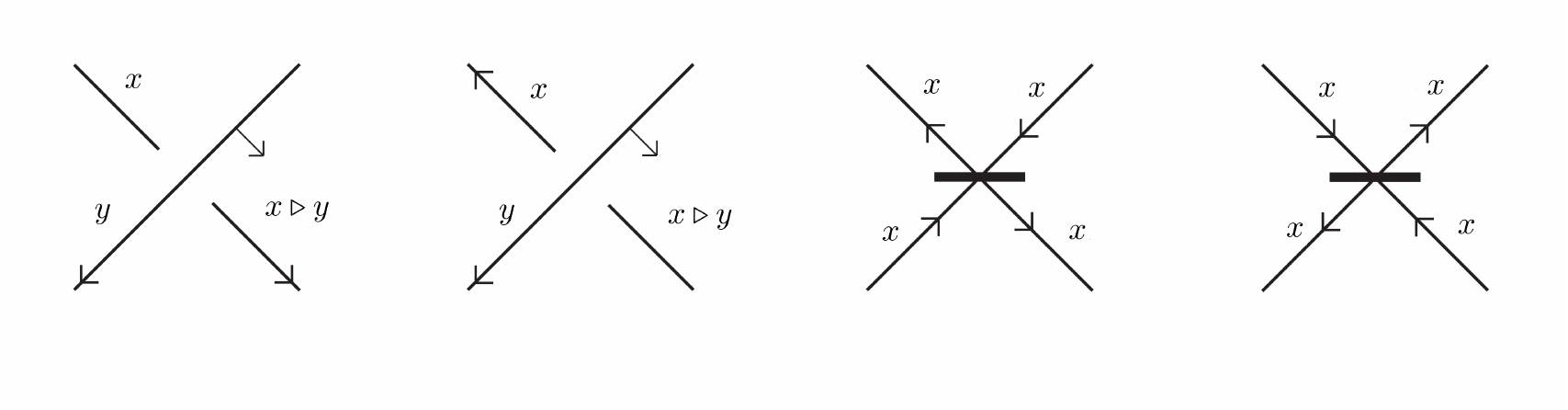}\]  
	\end{definition}
	
	We denote by $Col_X (D)$ the set of all X-colorings of D.

	\begin{theorem}
		Let $X$ be a finite quandle, $L$ an oriented surface-link and $D$ a marked graph diagram of $L$. Then,
		\[\sharp Col_X(L)=\sharp Col_X(D)\]
	\end{theorem}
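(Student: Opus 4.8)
The plan is to reduce the claim to the well-known fact that the $X$-coloring number of a surface-link can be computed from any broken surface diagram, and then to identify the arc-colorings of $D$ with the sheet-colorings of a specific broken surface diagram built from $D$. Recall that the set $Col_X(L)$ of $X$-colorings of an oriented surface-link $L$ may be identified with the set of assignments of elements of $X$ to the sheets of a broken surface diagram $B$ of $L$ satisfying the usual quandle relation along each double-point curve; by the Roseman moves this set (and in particular its cardinality) does not depend on the choice of $B$, and it coincides with $\mathrm{Hom}(\mathcal{Q}(L),X)$ for $\mathcal{Q}(L)$ the fundamental quandle of $L$.

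First I would construct from $D$ an explicit broken surface diagram $B=B(D)$ of $L$, following the recipe recalled in Section \ref{SSL}: place $L_-(D)$ just below the $w=0$ level and $L_+(D)$ just above it, join the two smoothings by the standard saddle surface in a neighborhood of each marked vertex, cap off the resulting trivial links $L_\pm(D)$ by disjoint embedded disks, and then project generically to $\mathbb{R}^3$. In $B(D)$ the classical crossings of $D$ persist as ordinary transverse double-point arcs, a neighborhood of each marked vertex contributes a single non-singular saddle sheet, and the capping disks contribute the remaining sheets. I would then record the dictionary between $A(D)$, the arc set of $D$, and the sheets of $B(D)$: an arc of $D$ determines the sheet of $B(D)$ through it in the equatorial band $-1<w<1$, with the behavior at a marked vertex dictated by its marker, and conversely every sheet in the equatorial band arises this way.

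With this dictionary in hand I would show that restriction to the equatorial band gives a bijection $Col_X(B(D))\to Col_X(D)$. That a valid sheet-coloring restricts to a valid arc-coloring is immediate: the double-point relations at the persistent crossings are exactly the crossing relations in the definition of $Col_X(D)$, and the relations forced along the saddle sheets are exactly the marked-vertex relations appearing in the coloring figure in the definition of an $X$-coloring of $D$. For the converse I would check that every arc-coloring of $D$ extends uniquely over $B(D)$: the equatorial sheets are colored by the arc colors, and the colors of the capping disks are then forced, using admissibility of $D$ --- the induced colorings of the trivial link diagrams $L_\pm(D)$ are valid and extend to the disk sheets in exactly one way. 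Combining the steps yields $\sharp Col_X(L)=\sharp Col_X(B(D))=\sharp Col_X(D)$.

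The step I expect to be the main obstacle is the verification at the marked vertices: one must check, over the possible local orientations at an oriented marked vertex and the two choices of marker, that the condition imposed on the four incident arcs in the definition of $Col_X(D)$ coincides precisely with the condition that colorings of the two local smoothings agree along the single saddle sheet of $B(D)$, i.e.\ that the coloring descends to the saddle cobordism. A more routine secondary point is the uniqueness of the extension over the capping disks, which can be dispatched using invariance of the coloring count of a trivial link diagram together with the Roseman moves relating different choices of caps. One could instead avoid broken surface diagrams altogether by proving directly that $D\mapsto \sharp Col_X(D)$ is invariant under the Yoshikawa moves of Section \ref{SSL} and that the quandle presented by the arcs and relations of $D$ is $\mathcal{Q}(L)$, via the presentation results of \cite{KJL,KJL2}; either way, reconciling the marked-vertex coloring rule with the geometry of a saddle is the crux.
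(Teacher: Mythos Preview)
The paper does not actually prove this theorem: it is stated without proof as a background fact, and the \texttt{proof} environment that follows belongs to the \emph{next} theorem (invariance of the quiver), where the authors simply cite \cite{KJL} for the statement that the set of quandle colorings is preserved by the oriented Yoshikawa moves. So there is nothing in the paper to compare your argument against step by step.

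That said, your proposal is correct and is one of the two standard routes. The alternative you sketch at the end --- proving directly that $\sharp Col_X(D)$ is invariant under Yoshikawa moves and identifying the quandle presented by the arcs and relations of $D$ with $\mathcal{Q}(L)$ via \cite{KJL,KJL2} --- is precisely what the paper is implicitly invoking. Your primary route through an explicit broken surface diagram $B(D)$ is more geometric and more self-contained (it does not defer the work to the cited references), while the Yoshikawa-move route is purely combinatorial and shorter once one is willing to quote the literature. Either buys the result; the broken-surface argument also makes transparent \emph{why} the marked-vertex rule is what it is, which is exactly the point you flag as the crux.

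One small simplification: in your construction of $B(D)$, once the equatorial band is colored the extension over the capping disks is automatic, since each component of $L_{\pm}(D)$ already carries a single color (the resolutions introduce no new relations), and any choice of capping surface for an unlink has each sheet color forced by its boundary. So your ``secondary point'' about uniqueness over the caps requires no separate Roseman-move argument.
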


We now have our main theorem:

	\begin{theorem}
		Let $X$ be a finite quandle, $S\subset Hom(X,X)$ and $L$ an oriented surface-link. Then the quiver $Q^S_X (L)$ is an invariant of $L$.
	\end{theorem}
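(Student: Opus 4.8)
The plan is to imitate the classical argument of \cite{CN}, with Yoshikawa moves playing the role of Reidemeister moves. Fix a marked graph diagram $D$ of $L$ and let $Q^S_X(D)$ be the directed graph whose vertex set is $\mathrm{Col}_X(D)$ and which has an edge from $f$ to $g$ whenever $g=\phi\circ f$ for some $\phi\in S$. The first point to check is that this is well defined, i.e. that post-composition with an endomorphism $\phi\in\mathrm{Hom}(X,X)$ sends $X$-colorings of $D$ to $X$-colorings of $D$. An $X$-coloring is an assignment of elements of $X$ to the arcs of $D$ subject to one local condition at each classical crossing (the relation $z=x\tr y$) and one local condition at each marked vertex (read off the picture defining an $X$-coloring). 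Both conditions are equations built from the quandle operation $\tr$, so they are preserved by any quandle homomorphism; hence $f\mapsto\phi\circ f$ is a well-defined self-map of $\mathrm{Col}_X(D)$, and $Q^S_X(D)$ is a well-defined directed graph with $\sharp\,\mathrm{Col}_X(D)$ vertices and constant out-degree $|S|$ (counting multiplicity).

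Next, by the Yoshikawa move calculus (see \cite{KJL2}), any two marked graph diagrams $D,D'$ representing the same oriented surface-link $L$ are related by planar isotopy and a finite sequence of Yoshikawa moves, so it suffices to produce, for each single move $D\leftrightarrow D'$, an isomorphism of directed graphs $Q^S_X(D)\xrightarrow{\ \sim\ }Q^S_X(D')$. Each Yoshikawa move changes $D$ only inside a disk $B$, leaving the diagram fixed outside $B$. The colorings of $D\setminus B$ that extend over $B$ are exactly those that extend over $B$ in $D'$, and for each such boundary coloring the set of extensions has the same cardinality on both sides; this is the bijection $\rho_B\colon\mathrm{Col}_X(D)\to\mathrm{Col}_X(D')$ underlying the equality $\sharp\,\mathrm{Col}_X(D)=\sharp\,\mathrm{Col}_X(D')$ from the previous theorem. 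Moreover, for every move $\rho_B$ can be written down explicitly: it is the identity on arcs outside $B$, and on the arcs inside $B$ the new colors are given by formulas in $\tr$ (and the bijections $f_y$) applied to the colors meeting $\partial B$. For the three classical Reidemeister moves these are the bijections already used in \cite{CN}; for the remaining oriented Yoshikawa moves one checks the analogous statement move-by-move, the point being that the marked-vertex coloring condition is itself expressed in terms of the quandle operation, so the extension of a boundary coloring across a marked vertex is again described by such formulas.

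The key observation is then that each $\rho_B$ is equivariant under post-composition: since $\rho_B(f)$ is obtained from $f$ by applying quandle-operation formulas to the colors of $f$, and a quandle homomorphism $\phi$ commutes with every such formula, we get
\[\rho_B(\phi\circ f)=\phi\circ\rho_B(f)\]
for all $f\in\mathrm{Col}_X(D)$ and all $\phi\in\mathrm{Hom}(X,X)$. Granting this, $\rho_B$, paired with the identity map on $S$ at the level of edge labels, is an isomorphism of directed graphs: it is a bijection on vertices, and if $f\to g$ is an edge of $Q^S_X(D)$ coming from $\phi\in S$, i.e. $g=\phi\circ f$, then $\rho_B(g)=\phi\circ\rho_B(f)$, so $\rho_B(f)\to\rho_B(g)$ is the corresponding edge of $Q^S_X(D')$; applying the same reasoning to $\rho_B^{-1}$ shows edges go to edges bijectively, with multiplicities respected. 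Composing the isomorphisms for the individual moves shows $Q^S_X(D)\cong Q^S_X(D')$ whenever $D,D'$ represent $L$, so the isomorphism type of $Q^S_X(L):=Q^S_X(D)$ depends only on $L$.

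The main obstacle I anticipate is the move-by-move verification in the second and third paragraphs for those Yoshikawa moves that involve marked vertices — in particular the ones in which a strand or a crossing passes a marked vertex, or a pair of marked vertices is created or destroyed — where one must confirm both that each boundary coloring extends across the altered region in the required number of ways and that the resulting bijection is given by quandle-equivariant formulas. In many of these moves the local colorings are heavily constrained (frequently forced outright by the marked-vertex condition), which should make equivariance transparent once $\rho_B$ is written down; keeping track of the orientations at the $4$-valent marked vertices is the part most likely to require care.
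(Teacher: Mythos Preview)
Your argument is correct and follows the same idea as the paper's proof: invariance of the coloring set under Yoshikawa moves (cited from \cite{KJL}) together with the observation from \cite{CN} that the quiver is determined by the homomorphism set via post-composition. The paper compresses this into two sentences by invoking the fundamental quandle $\mathcal{Q}(L)$ directly, whereas you spell out the move-by-move bijection $\rho_B$ and check equivariance by hand; your version is more explicit but not genuinely different in substance.
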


\begin{proof}
As noted in \cite{KJL} and elsewhere, the set of quandle colorings is 
preserved by oriented Yoshikawa moves. Then as noted in \cite{CN}, the 
quandle coloring quiver is determined by the set of quandle homomorphisms
from the fundamental quandle of the surface-link to the coloring quandle.
\end{proof}

\begin{example}
Let $X$ be the quandle given by the operation table
\[\begin{array}{r|rrrr}
\tr & 1 & 2 & 3 & 4 \\ \hline
1 & 1 & 1 & 4 & 3 \\
2 & 2 & 2 & 2 & 2 \\
3 & 4 & 3 & 3 & 1 \\
4 & 3 & 4 & 1 & 4
\end{array}\]
and let $\phi=[2,4,2,2]$ be the quandle endomorphism mapping $1,3,4\in X$ to 
$2\in X$ and mapping $2\in X$ to $4\in X$. Then the surface-links $6^{0,1}_1$
and $8_1$ both have ten $X$-colorings, but are distinguished by their quandle
colorings quivers as shown:
\[\includegraphics{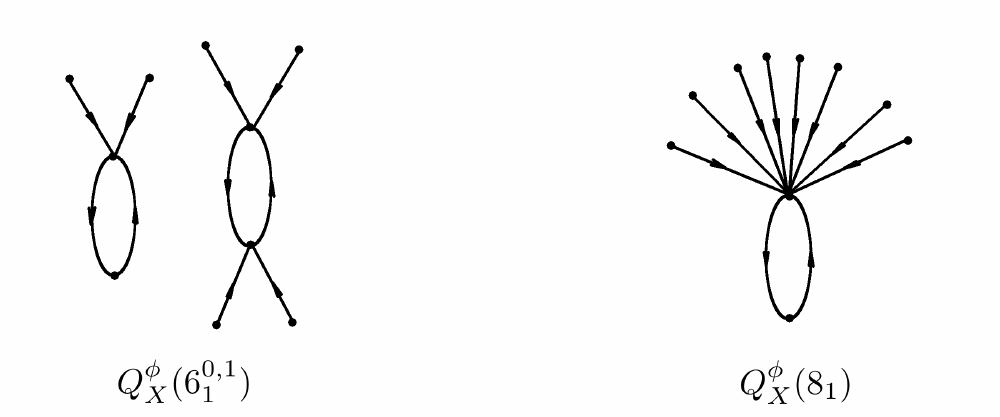}\]
In particular, this example shows that the quandle coloring quiver of a 
surface link is not determined by the number of quandle colorings of the
surface-link and hence is a proper enhancement.
\end{example}

\begin{corollary}
		Let $X$ be a finite quandle, $S\subset Hom(X,X)$ and $L$ an oriented surface-link. Then the in-degree polynomial $\Phi^{\mathrm{deg}_+}_X (L)$ is an invariant of $L$.
\end{corollary}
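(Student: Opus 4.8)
The plan is to deduce this corollary directly from the preceding theorem (that $Q^S_X(L)$ is an invariant of oriented surface-links) together with the general principle, recorded as a corollary in the classical case, that any graph invariant applied to an invariant quiver yields a surface-link invariant. First I would observe that the in-degree multiset $\{\deg^+(f) : f \in V(Q^S_X(L))\}$ is an isomorphism invariant of the directed graph $Q^S_X(L)$: a graph isomorphism is by definition a bijection on vertices preserving the edge relation with multiplicity, so it carries each vertex to a vertex of the same in-degree, hence induces a bijection of the in-degree multisets. Therefore the generating function $\Phi^{\deg^+}_{X,S}(L) = \sum_{f \in V(Q^S_X(L))} u^{\deg^+(f)}$, being a function of that multiset alone, is unchanged under graph isomorphism.

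Next I would invoke the theorem just proved: if $L$ and $L'$ are equivalent oriented surface-links, then $Q^S_X(L) \cong Q^S_X(L')$ as directed graphs (indeed the proof shows both are determined by $\mathrm{Hom}(\mathcal{Q}(L),X)$, which is preserved by Yoshikawa moves). Combining the two observations, $\Phi^{\deg^+}_{X,S}(L) = \Phi^{\deg^+}_{X,S}(L')$ whenever $L \sim L'$, which is exactly the assertion that the in-degree polynomial is an invariant. Specializing $S = \mathrm{Hom}(X,X)$ gives the statement as written with the notation $\Phi^{\deg^+}_X(L)$; I would remark parenthetically that the same argument applies verbatim for any choice of $S \subset \mathrm{Hom}(X,X)$, and in particular for singletons $S = \{\phi\}$, so the corollary really covers the whole family $\Phi^{\deg^+}_{X,S}$.

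There is essentially no obstacle here: the content is entirely carried by the previous theorem, and the only thing to check is the elementary fact that in-degree sequences are preserved under digraph isomorphism, which needs the (standard) convention that the quiver is a multigraph and "isomorphism" respects edge multiplicities. If one wanted to be maximally careful, the one subtlety worth spelling out is that $Q^S_X(L)$ is well-defined only up to isomorphism of labeled digraphs — the vertex set $\mathrm{Hom}(\mathcal{Q}(L),X)$ depends on a presentation of $\mathcal{Q}(L)$ — but since $\Phi^{\deg^+}_{X,S}$ factors through the isomorphism type, this ambiguity is harmless. Accordingly I would keep the proof to two or three sentences, mirroring the brevity of the analogous classical corollary.
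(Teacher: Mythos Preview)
Your proposal is correct and matches the paper's approach: the corollary is stated without proof in the paper, as it follows immediately from the preceding theorem together with the earlier principle that any directed-graph invariant applied to $Q^S_X(L)$ yields a link invariant. Your write-up simply makes explicit the one-line observation that the in-degree polynomial depends only on the isomorphism type of the quiver, which is precisely the intended (and essentially trivial) deduction.
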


	\begin{example}
		Let $X$ be the dihedral  $3$-quandle, $Y$ the dihedral  $4$-quandle and $Z$ the tetrahedral quandle given by the operation tables as shown below.
		\[\begin{array}{l|lll}
		\tr & 1 & 2 & 3\\ \hline
		1 & 1 & 3 & 2 \\
		2 & 3 & 2 & 1 \\
		3 & 2 & 1 & 3, \\
		\end{array}\qquad
		\begin{array}{l|llll}
		\tr & 1 & 2 & 3 & 4\\ \hline
		1 & 1 & 3 & 1 & 3 \\
		2 & 4 & 2 & 4 & 2 \\
		3 & 3 & 1 & 3 & 1 \\
		4 & 2 & 4 & 2 & 4,
		\end{array}\qquad
		\begin{array}{l|llll}
		\tr & 1 & 2 & 3 & 4\\ \hline
		1 & 1 & 4 & 2 & 3 \\
		2 & 3 & 2 & 4 & 1 \\
		3 & 4 & 1 & 3 & 2 \\
		4 & 2 & 3 & 1 & 4.
		\end{array}\]
		
		 Let $L$ be an oriented surface-link with ch-index $\chi(L)\le 10$ presented by marked graph diagrams as shown below.
		
			\[\includegraphics[height=8cm, width=12cm]{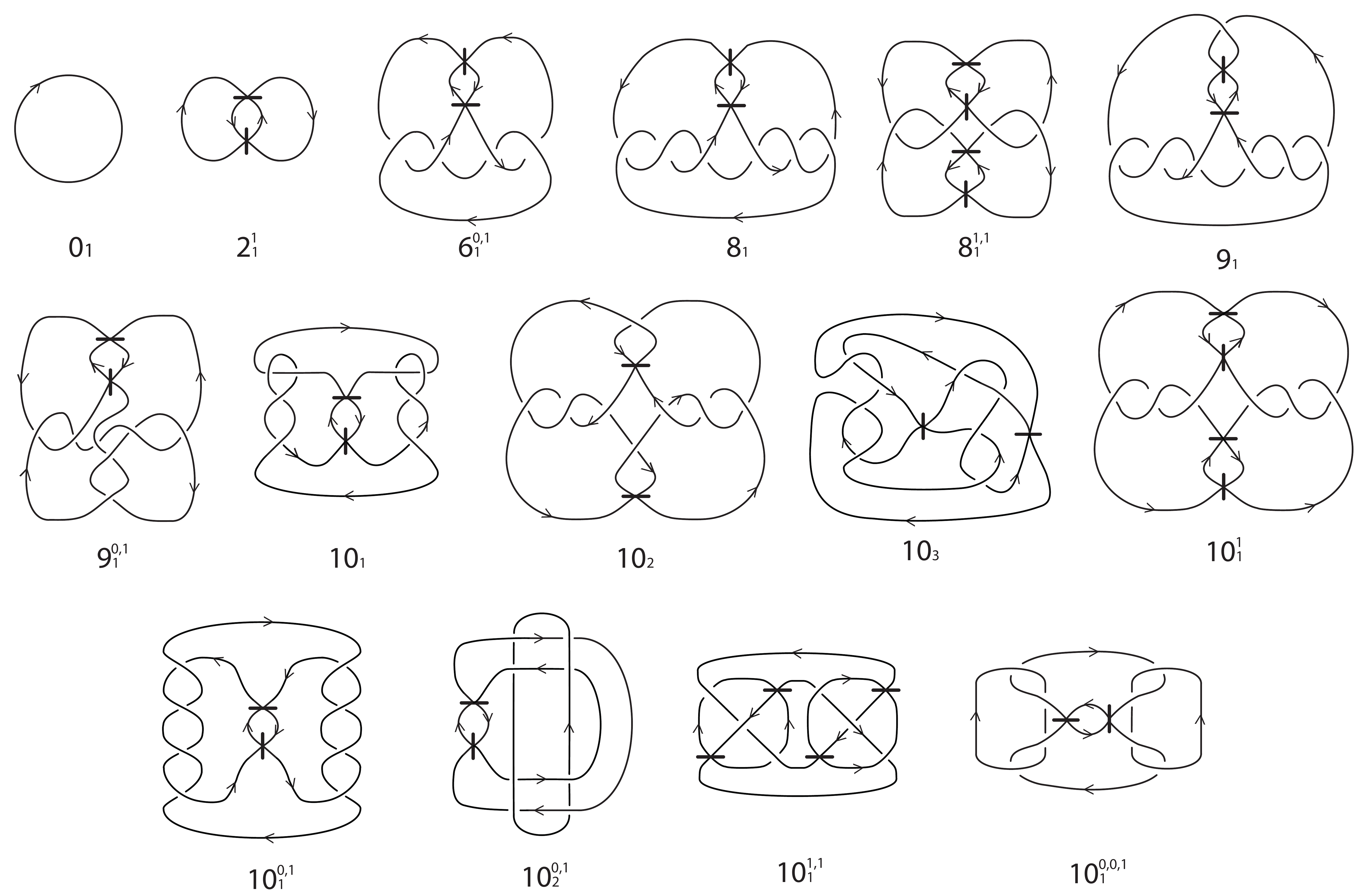}\]
		
		 Then for each $L$, we have in-degree quiver polynomials $\Phi^{deg^+}_X (L)$, $\Phi^{deg^+}_Y (L)$ and $\Phi^{deg^+}_Z (L)$ as follows:
		
		\begin{center}
			\begin{tabular} {|l|l|l|l|}
				\hline
				$L$ & $\Phi^{deg^+}_X (L)$ & $\Phi^{deg^+}_Y (L)$ & $\Phi^{deg^+}_Z (L)$\\
				\hline
				$0_1$ & $3u^9$ & $4u^{16}$ & $4u^{16}$\\
				\hline
				$2^2 _1$ & $3u^9$ & $4u^{16}$ & $4u^{16}$\\
				\hline
				$6^{0,1} _1$ & $3u^9$ & $4u^8 +4u^{24}$ & $4u^{16}$\\
				\hline
				$8_1$ & $6u^6 +3u^{12}$ & $4u^{16}$ & $12u^{12} +4u^{28}$\\
				\hline
				$8^{1,1} _1$ & $3u^9$ & $4u^8 +4u^{24}$ & $4u^{16}$\\
				\hline
				$9_1$ & $6u^6 +3u^{12}$ & $4u^{16}$  & $4u^{16}$\\
				\hline
				$9^{0,1} _1$ & $3u^9$ & $8u^8 +4u^{16} +4u^{32}$  & $4u^{16}$\\
				\hline
				$10_1$ & $3u^9$ & $4u^{16}$ & $12u^{12} +4u^{28}$\\
				\hline
				$10_2$ & $6u^6 +3u^{12}$ & $4u^{16}$ & $4u^{16}$\\
				\hline
				$10_3$ & $3u^9$ & $4u^{16}$ & $4u^{16}$\\
				\hline
				$10^1 _1$ & $6u^6 +3u^{12}$ & $4u^{16}$ & $12u^{12} +4u^{28}$\\
				\hline
				$10^{0,1} _1$ & $3u^9$ & $8u^8 +4u^{16} +4u^{32}$ & $12u^{12} +4u^{28}$\\
				\hline
				$10^{0,1} _2$ & $6u^6 +3u^{12}$ & $8u^8 +4u^{16} +4u^{32}$ & $4u^{16}$\\
				\hline
				$10^{1,1} _1$ & $3u^9$ & $4u^8 +4u^{24}$ &$4u^{16}$\\
				\hline
				$10^{0,0,1} _1$ & $6u^6 +3u^{12}$ & $24u^8 +4u^{24} +4u^{52}$ & $12u^{12} +4u^{28}$\\
				\hline
			\end{tabular}
		\end{center}
	\end{example}

We conclude this section with an observation:

\begin{remark}\label{rem:subq}
We observe that a coloring of a marked graph diagram is also a coloring 
of the top and bottom smoothed diagrams $T$ and $B$. It follows that for any 
cobordism $L$ from $T$ to $B$, the quandle coloring quiver of $L$ is a subquiver
of the quandle coloring quivers of $T$ and $B$ respectively.
\end{remark}
	
\section{\large\textbf{Questions}}\label{Q}

In this paper, we have only initiated the study of quandle coloring quiver 
invariants of surface-links via marked graph diagrams. In this section we 
collect some open questions for future work in this area.

\begin{itemize}
\item Can the observation in Remark \ref{rem:subq} be applied to find obstructions
to knot concordance or cobordisms?
\item For which quandles is the quandle coloring quiver of cobordism precisely the
intersection of the quivers of the top and bottom smoothed diagrams?
\item What other properties of quandle-colored marked graph diagrams
can be used to enhance the quandle coloring quiver?
\item Since a quiver is a category, the quandle coloring quiver is a categorification
of the quandle coloring invariant of oriented surface-links. What categorical properties
do these categories have for particular classes of quandles, for a choice of surface-link?
\end{itemize}

	\bibliography{jk-sn-ms}{}
	\bibliographystyle{abbrv}

	\bigskip

	\noindent
	\textsc{Department of Mathematics, \\
		Pusan National University, \\
		Busan 46241, Republic of Korea
	}

	\medskip
	
	\noindent
	\textsc{Department of Mathematical Sciences \\
		Claremont McKenna College \\
		850 Columbia Ave. \\
		Claremont, CA 91711}

\end{document}